\newcommand{\ds}{\displaystyle}
\newcommand{\ol}{\overline}
\newcommand{\ul}{\underline}
\newcommand{\vl}{\;\vert\;}
\newcommand{\Z}{\mathbb{Z}}
\newcommand{\Q}{\mathbb{Q}}
\newcommand{\shift}{\mathsf{\Sigma}}
\newcommand{\Reg}{\mathrm{reg}}
\newcommand{\Hom}{\mathrm{Hom}}
\newcommand{\pdim}[1]{\mathrm{pd}(#1)}
\newcommand{\CMp}{Cohen-Macaulay}
\newcommand{\BS}{Boij-S\"oderberg\ }
\newcommand{\ls}{\leqslant}
\newcommand{\gs}{\geqslant}
\newcommand{\lst}[2]{#1_1,\dots,#1_{#2}}
\newcommand{\pure}[1]{\pi(#1)}
\newcommand{\cc}{\mathbf c}
\newcommand{\dd}{\mathbf d}
\newcommand{\kk}{\mathbf k}
\newcommand{\goodtimes}{\cdot}
\newcommand{\biggoodtimes}{\prod}
\def\adots{\mathinner{\mkern1mu\raise\p@
\vbox{\kern7\p@\hbox{.}}\mkern2mu
\raise4\p@\hbox{.}\mkern2mu\raise7\p@\hbox{.}\mkern1mu}}
\newcommand{\defi}[1]{\textsf{#1}} 
\renewcommand{\rm}[1]{\mathrm{#1}}
\renewcommand{\b}{\beta}
\newcommand{\s}{\sigma}
\renewcommand{\t}{\tau}
\theoremstyle{plain} 
\newtheorem{thm}{Theorem}[section]
\newtheorem{lem}[thm]{Lemma}
\newtheorem{prop}[thm]{Proposition}
\newtheorem{cor}[thm]{Corollary}
\newtheorem*{conj*}{Conjecture}
\newtheorem{question}[thm]{Question}
\newtheorem{alg}[thm]{Algorithm}
\newtheorem{rem}[thm]{Remark}
\newtheorem*{cor57}{Corollary~\ref{cor:altdecompCI}}
\theoremstyle{definition}
\newtheorem{defn}[thm]{Definition}
\newtheorem{example}[thm]{Example}  
\theoremstyle{remark}  
\newtheorem*{observ*}{Observation}
\newtheorem{case}{Case}
\newtheorem*{claim*}{Claim}
\newtheoremstyle{editorialNotes}
	{10mm} 
	{10mm}
	{\slshape}
	{-80pt}
	{\bfseries}
	{}
	{10mm}
	{}
\theoremstyle{editorialNotes}
\title[Non-simplicial decompositions]{Non-simplicial decompositions of Betti diagrams of complete intersections}
\author[Gibbons]{Courtney Gibbons}
\thanks{The first and fifth authors were partially funded through the AMS Mathematical Research Communities by a grant from the NSF (DMS-0751449).}
\author[Jeffries]{Jack Jeffries}
\author[Mayes]{Sarah Mayes}
\author[Raicu]{Claudiu Raicu}
\author[Stone]{Branden Stone}
\thanks{The fifth author was also partially funded by the NSF grant, Kansas Partnership for Graduate Fellows in K-12 Education (DGE-0742523).}
\author[White]{Bryan White}
\address{Courtney Gibbons, University of Nebraska-Lincoln, Department of Mathematics, 203 Avery Hall, P.O. Box 880130, Lincoln, NE 68588-0130}
\email{s-cgibbon5@math.unl.edu}
\address{Jack Jeffries, Department of Mathematics, University of Utah, 155 S 1400 E RM 233
Salt Lake City, UT, 84112-0090}
\email{jeffries@math.utah.edu}
\address{Sarah Mayes, University of Michigan, Department of Mathematics, 4848 East Hall, 530 Church Street, Ann Arbor, MI, 48104-1043}
\email{mayess@umich.edu}
\address{Claudiu Raicu, Princeton University, Department of Mathematics, 1009 Fine Hall, Princeton, NJ, 08544-1000}
\email{craicu@princeton.edu}
\address{Branden Stone, Mathematics Program, Bard College, P.O. Box 5000, Annandale-on-Hudson, NY 12504}
\email{bstone@bard.edu}
\address{Bryan White, Department of Mathematics and Statistics, MSC01 1115, 1 University of New Mexico, Albuquerque, New Mexico, 87131-0001}
\email{bcwhite@unm.edu}
\subjclass[2010]{Primary: 13D02; Secondary: 13C99}
\begin{document}

\maketitle

\begin{abstract}
	We investigate decompositions of Betti diagrams over a polynomial ring within the framework of Boij--S\"oderberg theory.  That is, given a Betti diagram, we decompose it into pure diagrams. Relaxing the requirement that the degree sequences in such pure diagrams be totally ordered, we are able to define a multiplication law for Betti diagrams that respects the decomposition and allows us to write a simple expression the decomposition of the Betti diagram of any complete intersection in terms of the degrees of its minimal generators. In the more traditional sense, the decomposition of complete intersections of codimension at most 3 are also computed as given by the totally ordered decomposition algorithm obtained from \cite{ES1}. In higher codimension, obstructions arise that inspire our work on an alternative algorithm. 
\end{abstract}

\section{Introduction}

Algebraists have long accepted that it is useful to discard structure and work with numerical invariants; the new insight arising from Boij--S\"oderberg theory is that focusing on numerical invariants \textit{up to rational multiple} can also yield information about modules.  Thinking of Betti diagrams as integral points on rays in a rational vector space produces a convex polytope with a simplicial structure (as first conjectured in \cite[Conjectures 2.4, 2.10]{BS1}), and this structure leads to Algorithm~\ref{StandardDecomp} for decomposing Betti diagrams into nonnegative rational combinations of pure diagrams that are linearly independent \cite[Decomposition Algorithm]{ES1}.  Recent results harness the power of  Algorithm~\ref{StandardDecomp}; examples include a proof of the Multiplicity Conjecture of Herzog, Huneke, and Srinivasan (see \cite[Section 2.2]{BS1}), finding a polynomial bound on the regularity of an ideal in terms of half its syzygies \cite{McC} and obtaining a structural result for decompositions of Betti diagrams of a class of Gorenstein ideals \cite[Theorem 5.4]{NS}.

However, Algorithm~\ref{StandardDecomp} clashes with some algebraic structures like the tensor product, and the output of the algorithm can be hard to predict even for seemingly simple objects like complete intersections. In this paper, we will consider an alternative decomposition of Betti diagrams where the required ordered chain condition of Algorithm~\ref{StandardDecomp} is relaxed.  The following example compares two decompositions of a Betti diagram; we refer the reader to Section~\ref{sec:def-notation} for some of the necessary background and notation.  Let  $S = \mathbb{Q}[x,y,u,v]$, and consider the complete intersection $M = \mathbb{Q}[x, y, u, v]/(x, y^2, u^4, v^8)$ as an $S$-module. Then, decomposing the Betti table of $M$ over $S$ via Algorithm~\ref{StandardDecomp}, we obtain the sum:
\begin{equation}\label{standardexample}
\begin{aligned}
\beta(M) &= 168 \pure{0,1,3,7,15} + 60 \pure{0,2,3,7,15} + 210 \pure{0,2,5,7,15} \\ 
&+ 30 \pure{0,4,5,7,15} + 60 \pure{0,4,6,7,15} + 240 \pure{0,4,6,11,15} \\ 
&+ 240 \pure{0,4,9,11,15} + 60 \pure{0,8,9,11,15} + 30 \pure{0,8,10,11,15} \\
&+ 210 \pure{0,8,10,13,15} + 60 \pure{0,8,12,13,15} + 168 \pure{0,8,12,14,15}.
\end{aligned}
\end{equation}
In this decomposition, there is no obvious relationship between the coefficients or degree sequences and the degrees of the forms in the ideal defining $M$.  We pursue this theme in Section~\ref{sec:codim4}, where we illustrate that there is not a simple closed formula for such decompositions of complete intersections. However, if we are willing to relax the requirement of 
Algorithm~\ref{StandardDecomp} that the corresponding degree
sequences form an ordered chain, then there is a simple decomposition
of $\beta(M)$ determined by the degrees of the forms in the ideal defining $M$ (see Section~\ref{sec:main}):
\begin{equation}\label{cidecompexample}
\beta(M) = (1 \cdot 2 \cdot 4 \cdot 8) \sum_{\sigma\in \mathrm{Perm}(\{1,2,4,8\})}\pure{0,\sigma(1),\sigma(1)+\sigma(2),\sigma(1)+\sigma(2)+\sigma(4),15}.
\end{equation}

In contrast with the formula from \eqref{standardexample}, it is easy to parametrize the degree sequences that
arise in \eqref{cidecompexample}. Even better, the coefficients are uniform: each coefficient equals the multiplicity
of $M$. On the other hand, equation~\eqref{cidecompexample} involves 24 summands, whereas equation~\eqref{standardexample} only involves
12.
The decomposition in \eqref{cidecompexample} has origins in Section 4 of Boij-S\"oderberg's \cite{BS1}; we prove that
a natural generalization of the formula in \eqref{cidecompexample} holds in general.  We further prove that \eqref{cidecompexample}
follows immediately from a natural multiplication law for Betti diagrams that is induced by
the tensor product of free complexes (see Section~\ref{sec:main}). From this perspective, equation \eqref{cidecompexample} is simply
the expanded version of a product of pure Betti diagrams:
\begin{equation}
\beta(M)=\pure{0,1} \goodtimes \pure{0,2} \goodtimes \pure{0,4} \goodtimes \pure{0,8}.
\end{equation}

In our view, the most interesting aspect of this work is the insight that there are cases where the coefficients in a decomposition behave better when the decomposition does not respect the partial order.  From this perspective, the uniqueness of the decomposition provided by Algorithm~\ref{StandardDecomp} is, at times, a handicap instead of a boon.  Once we adopt this perspective, the multiplication law for pure diagrams and its corresponding formula are actually quite elegant. We obtain the following:

\begin{cor57}
Suppose that $S/I$ is a complete intersection where $I=(f_1, \dots, f_n)$ and $E=(e_1, \dots, e_n)$ such that $f_i$ is of degree $e_i$. Writing $\mathfrak{S}_n$ for the group of permutations of $\{1,\dots,n\}$, one has the following decomposition of $\beta(S/I)$ as a nonnegative rational sum of pure diagrams: 
\[
    \beta(S/I) = e_1\cdot e_2 \cdots e_n \cdot \sum_{\sigma \in \mathfrak{S}_n} \pure{0,e_{\s_1},e_{\s_1}+e_{\s_2},\dots,e_{\s_1}+\cdots+e_{\s_n}}.
\]
\end{cor57} 

This result provides evidence that alternative decompositions of Betti diagrams may be simpler and easier to compute than the decompositions produced by Algorithm~\ref{StandardDecomp}.  When we expand our outlook to include other types of decompositions, several new questions arise.  For example, beyond complete intersections, what modules have a simple decomposition when we no longer require that the diagrams in the decomposition respect the partial order?  Do the diagrams or coefficients appearing in such decompositions convey algebraic information about the modules they decompose? By extending the natural product structure on Betti diagrams to a product on sums of pure diagrams, we endow the Boij-S\"oderberg cone with additional algebraic structure.  In so doing, what extra insight might we glean about the global structure of syzygies?

In Section~\ref{sec:def-notation}, we develop the necessary background and notation.  Section~\ref{sec:low-codim} demonstrates the difficulty in predicting the outcome of Algorithm~\ref{StandardDecomp}, even in low codimension. Even so, a simply closed formula is given for complete intersections of codimension at most three.  We investigate further difficulties in Section~\ref{sec:codim4} for the codimension four case.  In Section~\ref{sec:main} we show how to write the multiplication pure diagrams in terms of pure diagrams and give applications in the complete intersection case, relaxing the requirement that the degree sequences be totally ordered.

\section{Basic definitions and notation}\label{sec:def-notation}

Let $S=\kk[\lst x n]$ be a polynomial ring over a field $\kk$. We view ${S = \oplus_{i=0 \, }^\infty  S_i}$ as a graded ring with the standard grading.    For a graded $S$-module $M$ and any integer $t$, we denote the \defi{twist of  $M$ by $t$} as the module $M(t)$ whose graded pieces are defined by $M(t)_i = M_{i+t}$.  

Given $M$ an $S$-module of finite length, it has a minimal graded free resolution of the form
\[
	\xymatrixrowsep{5mm}
	\xymatrixcolsep{5mm}
	\xymatrix
		{
			0  & M  \ar[l] & \ds \bigoplus_j S(-j)^{\beta_{0,j}} \ar[l] & \ds \bigoplus_j S(-j)^{\beta_{1,j}} \ar[l] & \cdots \ar[l] & \ds \bigoplus_j S(-j)^{\beta_{c,j}} \ar[l] & 0 \ar[l].
		}
\]
The number $c$ is an invariant of $M$ and is called the \defi{projective dimension of $M$}, denoted $\pdim M$.  The integer $\beta_{ij}$ is the number of degree $j$ generators of a basis of the free module in the $i^{\text{th}}$ step of the resolution.  These $\beta_{ij}$ are independent of the choice of minimal free resolution, and we call these invariants the \defi{graded Betti numbers}.  The \defi{Betti diagram} of $M$ is defined to be 
\[
	\beta(M) = \begin{pmatrix}
		\vdots & \vdots & \vdots & \adots & \\
		\beta_{0{-1}} & \beta_{10} & \beta_{21} & \cdots &\\
		\beta_{00} & \beta_{11} & \beta_{22} & \cdots & \\
		\beta_{01} & \beta_{12} & \beta_{23} & \cdots & \\
		\vdots & \vdots & \vdots &\ddots &
	\end{pmatrix}.
\]
In this paper, we consider $\beta(M)$ as an element of  the vector space $V = \oplus_{i=0}^n \oplus_{j \in \Z} \Q$. If $D \in V$, then we say that $D$ is a \defi{diagram}.  

	Viewing $\beta(M)$ as a diagram, we would like to decompose it into a combination of ``pure diagrams''.  We say $\dd \in \Z^{n+1}$ is a \defi{degree sequence} if $d_i < d_{i+1}$ for all $i$ and that $\dd \ls \dd'$ if $d_i \ls d_i'$ for all $i$.  A \defi{chain of degree sequences} is a totally ordered collection $\{\cdots < \dd^0 < \dd^1 < \cdots < \dd^s < \cdots \}$.  We say that $M$ has a \defi{pure resolution} if $\beta(M)$ has at most one non-zero entry in each column.  For example, if $S = \kk [x,y,z]$ and $M = \kk [x,y,z]/(x^2,xy,y^2,xz)$ then 
	\[
		\beta(M) = \begin{pmatrix}
			1 & - & - & - \\
			- & 4 & 4 & 1
		\end{pmatrix}
	\]
gives a pure resolution. If $M$ has a pure resolution, then for each nonnegative integer $i \leq \pdim M$ there exists an integer $d_i$ for which $\beta_{ij}(M) \not= 0$ if and only if $j = d_i$.  In this case we say that $M$ has a \defi{pure resolution of type $\dd = (d_0,d_1, \dots, d_n)$}.  Further, if $\dd$ is a degree sequence then we can construct a diagram $\pure \dd \in V$ by 
\[ 
\pure \dd_{ij} = 
		\begin{cases}
				\prod_{k \not = i}\frac{1}{|d_i - d_k|}, & j = d_i \\
					 0 ,									& j \not = d_i
					 \end{cases}
			.
	\]
For example, if $\dd = (0,2,3,4)$, then 
\[
	\pure \dd = \begin{pmatrix}
		1/24 & - & - & - \\
		- & 1/4 & 1/3 & 1/8
	\end{pmatrix}.
\]

In proving the conjectures of M. Boij and J. S\"oderberg \cite{BS2},  D. Eisenbud and F.O. Schreyer show there is a unique decomposition of Betti tables in terms of $\pure \dd $ \cite{ES1}. 

\begin{thm}[\cite{BS1,ES1}]\label{thm:uniqueDecomp}
	Let $S = \kk [\lst x n]$ and $M$ an $S$-module of finite length.  Then there is a unique chain of degree sequences $\{\dd^0 < \cdots < \dd^s\}$ and a unique set of scalars $a_i \in \Q$ such that 
	\[
		\beta(M) = \sum_{i = 0}^s a_i \pure{\dd^i}.
	\]
\end{thm}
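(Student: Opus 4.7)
The plan is to prove existence by a greedy decomposition algorithm (essentially Algorithm~\ref{StandardDecomp} itself) and uniqueness by showing each step of the algorithm is forced. I would first invoke the construction of pure resolutions for every degree sequence, due to Eisenbud--Schreyer (and Eisenbud--Fl\o ystad--Weyman in characteristic zero); this guarantees that each $\pi(\dd)$ is a positive rational multiple of a genuine Betti diagram, so the $\pi(\dd)$ belong to the cone generated by Betti diagrams of finite length modules.

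For existence, given a nonzero diagram $D$ in the rational cone generated by Betti diagrams (starting with $D=\beta(M)$), define the \emph{bottom strand} $\dd(D) = (d_0,\dots,d_n)$ by $d_i = \min\{j : D_{i,j} \neq 0\}$. The key claim is that $\dd(D)$ is strictly increasing, and is therefore a valid degree sequence. Granting this, let $a$ be the largest scalar such that $D' := D - a\,\pure{\dd(D)}$ still has nonnegative entries at the positions $(i,d_i)$. By maximality, at least one such entry of $D'$ vanishes, so either $D' = 0$ or $\dd(D') > \dd(D)$ strictly in the componentwise partial order. Setting $\dd^0 := \dd(D)$, $a_0 := a$, and iterating on $D'$ produces the desired chain and coefficients.

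Termination is immediate: for $M$ of finite length over $S = \kk[x_1,\dots,x_n]$ the Auslander--Buchsbaum formula gives $\pdim M = n$, and the entries $d_i^k$ that arise are bounded below by the bottom strand of $\beta(M)$ and above by the regularity of $M$ plus $n$, so only finitely many degree sequences can occur. Uniqueness follows by the same recursive mechanism: in any putative decomposition $\beta(M) = \sum a_i \pure{\dd^i}$ along a chain $\dd^0 < \cdots < \dd^s$, the summand $\pure{\dd^0}$ is the unique one contributing at positions $(i,d_i^0)$, which forces $\dd^0 = \dd(\beta(M))$ and pins down $a_0$; induction on $D'$ handles the remaining summands.

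The main obstacle is the strict monotonicity of the bottom strand. A single entry $\beta_{i,j}$ with $j \le d_{i-1}$ would break the argument, so one must exclude this configuration for every $D$ produced along the algorithm. For $D = \beta(M)$ this is a consequence of the Herzog--K\"uhl equations $\sum_i (-1)^i \sum_j \b_{i,j} \, p(j) = 0$ (valid for every polynomial $p$ of degree less than $n$, since $M$ has finite length), combined with the minimality of the free resolution. The delicate point is that this property is preserved under the subtraction step: one has to verify that $\pure{\dd}$ itself satisfies the same Herzog--K\"uhl identities, so that the cone in which the algorithm runs is closed under the operation $D \mapsto D - a\,\pure{\dd(D)}$. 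With this closure in hand, the argument above goes through and delivers the unique chain decomposition asserted in the theorem.
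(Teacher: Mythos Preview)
The paper does not prove this theorem; it is quoted from \cite{BS1,ES1} as background, with Algorithm~\ref{StandardDecomp} recorded immediately afterward as the constructive mechanism. So there is no in-paper argument to compare against. That said, your sketch contains a genuine gap, and it is precisely the gap that separates the Boij--S\"oderberg \emph{conjecture} from the Eisenbud--Schreyer \emph{theorem}.

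You correctly isolate the crux: after replacing $D$ by $D' = D - a\,\pure{\dd(D)}$, one must know that $D'$ again has a strictly increasing bottom strand, so that the next step of the greedy procedure makes sense. Your proposed justification---that $D'$ inherits the Herzog--K\"uhl relations because both $D$ and $\pure{\dd(D)}$ satisfy them---is true but does not do the job. Nonnegativity together with the Herzog--K\"uhl equations does \emph{not} force the minimal shifts to be strictly increasing. For $n=2$, the diagram with nonzero entries $D_{0,0}=1$, $D_{1,2}=4$, $D_{2,2}=1$, $D_{2,3}=2$ satisfies both relations ($1-4+1+2=0$ and $0-8+2+6=0$) yet has bottom strand $(0,2,2)$. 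Thus the cone cut out by ``Herzog--K\"uhl plus nonnegativity'' is strictly larger than the cone on which the greedy algorithm is guaranteed to succeed, and your closure argument does not keep the iteration inside the smaller cone.

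What the actual proof in \cite{ES1} supplies is the \emph{outer} description of the Boij--S\"oderberg cone: every Betti table pairs nonnegatively with the cohomology table of every (supernatural) vector bundle on projective space, and these pairings are exactly the facet inequalities of the simplicial fan. It is this facet description---not the Herzog--K\"uhl equalities---that certifies $D'$ remains in the cone after each subtraction and hence retains strictly increasing minimal and maximal shifts. Your outline of termination and uniqueness is the standard one and is fine once existence is in hand, but the inductive existence step cannot be closed without invoking (or reproving) the Eisenbud--Schreyer facet inequalities.
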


The unique decomposition in Theorem \ref{thm:uniqueDecomp} respects the partial order (see \cite[Definition 2]{BS2}) of the $\dd^i$'s and is obtained by applying the greedy algorithm to a special chain of degree sequences.  As defined in \cite{BS1}, the \defi{maximal shifts} and \defi{minimal shifts} of degree $i$ of a module $M$ are ${\ol d_i (M) = \max \{ j \vl \beta_{ij}(M) \not= 0\} }$ and \linebreak ${\ul d_i (M) = \min \{ j \vl \beta_{ij}(M) \not= 0\} }$, respectively.  If $M$ is \CMp, then the sequences defined by ${\ol \dd = (\ol d_0, \ol d_1, \dots, \ol d_c) }$ and ${\ul \dd = (\ul d_0, \ul d_1, \dots, \ul d_c) }$ are degree sequences.  

\begin{alg}[Totally Ordered Decomposition Algorithm \cite{ES1}] \label{StandardDecomp}
Let  $S$ be $\kk [\lst x n]$ and $M$ be a finitely generated $S$-module of finite length.  Set $\beta = \beta(M)$.
	\begin{enumerate}
		\item[Step 1:] Identify the minimal degree sequence $\ul \dd$ of $\beta$;
		\item[Step 2:] Choose $q > 0 \in \Q$ maximal such that $\beta - q\pure{\ul \dd}$ has non-negative entries;
		\item[Step 3:] Set $\beta = \beta - q\pure{\ul \dd}$;
		\item[Step 4:] Repeat Steps 1-3 until $\beta$ is a pure diagram;
		\item[Step 5:] Write $\beta(M)$ as a sum of the the $q\pure{\ul \dd}$ obtained in the above steps.
	\end{enumerate}		
\end{alg}

We note that our choice of $\pure{\dd}$ differs from the choices used in \cite{BS1} and \cite{ES1}. In \cite{BS1}, they choose the pure diagram with $\beta_{0}=1$; in \cite{ES1}, they choose the smallest possible pure diagram with integral entries. Since the pure diagrams with degree sequence $\dd$ form a one-dimensional vector space, this different choice only affects the coefficients that arise in the algorithm. As we will note in Remark~\ref{goodchoice}, one advantage to our choice is certain uniform formulas.

 	Let $D \in V = \oplus_{i=0}^n \oplus_{j \in \Z} \Q$ be a diagram. Define the \defi{dual} of $D$, denoted $D^*$, via the formula 
	\[
		(D^*)_{ij} = D_{n-i,-j},
	\] 
and define \defi{twist} $D(r)$ via the formula 
	\[
		D(r)_{ij} = D_{i,r+j}.
	\]  
These definitions mimic the functors $\Hom_S(-,S) = -^*$ and $-\otimes S(r)$ for modules; one may check that $\beta(M^*) = \beta(M)^*$ and $\beta(M(r)) = (\beta(M))(r)$.  In particular, if $M$ is a Gorenstein module of finite length, the Betti diagram will be self-dual up to shift by $\Reg(M)$.

\begin{thm}[Symmetric Decomposition \cite{EKS},\cite{NS}]\label{symmetric} 
	Let $S=\kk [x_1,\ldots,x_n]$ and $M$ a Gorenstein $S$-module of finite length.  Then the decomposition of $\beta(M)$ via Algorithm \ref{StandardDecomp} is symmetric; i.e., 
		\[
			\beta(M) = a_0 \pure{\dd^0} + a_1 \pure{\dd^1} + \cdots + a_1 \pure{\dd^1}^*(r) + a_0 \pure{\dd^0}^*(r)
		\] 
 	where $r = \Reg(M)$.
\end{thm}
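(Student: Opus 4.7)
The plan rests on two ingredients: the self-duality of the Betti diagram of a Gorenstein finite-length module, and the uniqueness portion of Theorem~\ref{thm:uniqueDecomp}. The idea is to dualize an Algorithm~\ref{StandardDecomp}-decomposition of $\beta(M)$ and then match it against the original via uniqueness, forcing the palindromic structure.

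\textbf{Step 1: Self-duality of $\beta(M)$.} First I would establish the diagram identity $\beta(M) = \beta(M)^*(r)$. The mechanism is standard: if $F_\bullet$ is a minimal free resolution of $M$, then $\Hom_S(F_\bullet, S)$ computes $\ext^*_S(M, S)$. Since $M$ has finite length and is Gorenstein of codimension $n$, the only nonzero $\ext$ is $\ext^n_S(M, S)$, which is isomorphic to $M$ up to a degree shift determined by $\Reg(M)$. Hence the reversed $S$-dual of $F_\bullet$ is itself (a shift of) a minimal free resolution of $M$, which translates precisely into the claimed symmetry $\beta_{ij}(M) = \beta_{n-i,\,r-j}(M)$ at the level of Betti diagrams.

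\textbf{Step 2: Dualize the decomposition and invoke uniqueness.} Given the decomposition $\beta(M) = \sum_{i=0}^s a_i \pure{\dd^i}$ with $\dd^0 < \cdots < \dd^s$ from Theorem~\ref{thm:uniqueDecomp}, apply $(-)^*(r)$ to both sides and use Step 1 to obtain
\[
\beta(M) \;=\; \beta(M)^*(r) \;=\; \sum_{i=0}^s a_i \pure{\dd^i}^*(r).
\]
A direct computation from the definitions shows that $\pure{\dd}^*(r)$ is itself the pure diagram of type $\tilde\dd := (-r-d_n, -r-d_{n-1}, \ldots, -r-d_0)$, with coefficient exactly $1$ by virtue of the normalization highlighted in Remark~\ref{goodchoice}. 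Moreover, the assignment $\dd \mapsto \tilde\dd$ is order-reversing on degree sequences: if $\dd \leq \dd'$ componentwise then $\tilde\dd \geq \tilde\dd'$. Thus the dualized decomposition has degree sequences forming a decreasing chain $\tilde\dd^0 > \cdots > \tilde\dd^s$, which after reindexing $i \mapsto s-i$ becomes a strictly increasing chain. By the uniqueness in Theorem~\ref{thm:uniqueDecomp}, this reindexed chain must coincide with the original, forcing
\[
\dd^{s-i} = \tilde\dd^i \qquad \text{and} \qquad a_{s-i} = a_i
\]
for all $i$. Substituting $\pure{\dd^{s-i}} = \pure{\dd^i}^*(r)$ back into the original sum yields the palindromic expression in the statement.

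\textbf{Main obstacle.} The substantive content is concentrated in Step~1: pinning down the precise shift $r$ and verifying the self-duality with the paper's sign conventions for $(-)^*$, $(r)$, and $\Reg(M)$. Once the diagram-level symmetry is cleanly in place, the rest is essentially formal, relying on the order-reversing behavior of $(-)^*(r)$ on degree sequences together with the uniqueness of the chain decomposition.
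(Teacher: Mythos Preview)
The paper does not supply its own proof of this theorem: it is quoted with attribution to \cite{EKS} and \cite{NS} and then used as a black box (notably in the proof of Proposition~\ref{prop:structure}). So there is no in-paper argument to compare against.

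Your proposal is the standard argument and is sound. The paper itself records the key input to your Step~1 in the paragraph immediately preceding the theorem (``if $M$ is a Gorenstein module of finite length, the Betti diagram will be self-dual up to shift by $\Reg(M)$''), and your Step~2 is exactly how one leverages the uniqueness in Theorem~\ref{thm:uniqueDecomp}: the operation $D\mapsto D^*(r)$ carries $\pure{\dd}$ to $\pure{\tilde\dd}$ on the nose with this normalization, reverses the partial order on degree sequences, and hence turns one chain decomposition into another, which must then agree with the original. You are also right to flag the bookkeeping of the shift as the only genuine content; with the paper's conventions $(D^*)_{ij}=D_{n-i,-j}$ and $D(r)_{ij}=D_{i,r+j}$, the precise integer that makes $\beta(M)=\beta(M)^*(r)$ involves both the socle degree and $n$, so one should not take the symbol ``$r=\Reg(M)$'' in the statement too literally without checking the conventions. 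This does not affect the structure of your argument.
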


\section{Complete Intersections in Low Codimension}\label{sec:low-codim}

Let $S = \kk [\lst x d]$ be a polynomial ring over a field $\kk$ and let $\lst f d$ be a homogeneous regular sequence.  If $I = (\lst f d)$, then the ring $S/I$ is called a \defi{complete intersection}.  The Koszul complex on $\lst f d$ provides a minimal resolution of $S/I$ over $S$, so the Betti table, hence also the \BS decomposition, of $S/I$ is completely determined by the degrees $e_i$, i.e., the \defi{type} $(e_1,\dots,e_d)$ of the complete intersection $S/I$. In this section we investigate the following question concerning the decomposition of Betti tables arising from complete intersections.

\begin{question}\label{quest:ci-decompose}
	Let $S = \kk [\lst x d]$ and $I = (\lst f d)$ be an ideal of $S$ generated by a homogeneous regular sequence with $\deg(f_i) = e_i$.  How does the Betti table decomposition from Algorithm~\ref{StandardDecomp} depend on the degrees $e_i$? Can we describe this relationship by a simple formula?
\end{question}

 In codimension at most three, we are able to answer this, as the decomposition behaves uniformly. When the codimension is larger than three, a fundamental complication arises in the bookkeeping, leaving Question~\ref{quest:ci-decompose} unanswered.

\begin{prop}\label{prop:structure}
	Let $S = \kk [\lst x d]$ and $I = (\lst f d)$ be an ideal generated by a homogeneous regular sequence with $\deg(f_i) = e_i$ where $e_i \leq e_{i+1}$ for all $i$.  If $d \ls 3$, then the Betti table decomposition obtained from Algorithm~\ref{StandardDecomp} is completely determined by the degrees $\lst e d$.  In particular, we have the following.  
	\begin{enumerate}
		\item[If $d = 1$:] 
		\[ \beta(S/I) = e_1\cdot\pure{0,e_1}. \]
		
		\item[If $d = 2$:] 
		\[
			\beta(S/I) = e_1e_2\cdot\pure{0,e_1,e_1+e_2} + e_1e_2 \cdot\pure{0,e_2,e_1+e_2}.
		\]
	 	
		\item[If $d = 3$:] 
		\begin{align*}
			\beta(S/I) = &\  e_1e_2(e_2+e_3)\cdot\pure{0,e_1,e_1+e_2,e_1+e_2+e_3}\\
			 			 & + e_1e_2(e_3-e_1)\cdot \pure{0,e_2,e_1+e_2,e_1+e_2+e_3}\\
					     & + 2e_1e_2(e_1+e_3-e_2)\cdot \pure{0,e_2,e_1+e_3,e_1+e_2+e_3}\\
					 	 & + e_1e_2(e_3-e_1)\cdot \pure{0,e_3,e_1+e_3,e_1+e_2+e_3}\\
					 	 & + e_1e_2(e_2+e_3)\cdot \pure{0,e_3,e_2+e_3,e_1+e_2+e_3}.
		\end{align*}
	\end{enumerate}
\end{prop}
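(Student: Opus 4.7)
My plan is to prove each case by directly applying Algorithm~\ref{StandardDecomp}, using Theorem~\ref{symmetric} to halve the work in codimension three. Since $S/I$ is a complete intersection, its minimal free resolution is the Koszul complex on $f_1,\dots,f_d$, so $\beta_{i,j}(S/I)$ equals the number of subsets $T\subseteq\{1,\dots,d\}$ with $|T|=i$ and $\sum_{t\in T}e_t=j$.

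The cases $d=1$ and $d=2$ reduce to short computations. For $d=1$, the Betti diagram has two entries equal to $1$ at positions $(0,0)$ and $(1,e_1)$, while $\pure{0,e_1}$ has the value $1/e_1$ at both positions; thus Algorithm~\ref{StandardDecomp} terminates in one step with coefficient $e_1$. For $d=2$, the diagram has four entries equal to $1$; the algorithm identifies the minimal degree sequence $(0,e_1,e_1+e_2)$, subtracts the maximal multiple of its pure diagram (with the constraint binding at position $(1,e_1)$), and a direct check shows the residual equals $e_1e_2\cdot\pure{0,e_2,e_1+e_2}$, completing that case.

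For $d=3$, the Betti diagram has eight entries equal to $1$, indexed by subsets of $\{1,2,3\}$. I first verify that the five proposed degree sequences form a totally ordered chain, which is immediate coordinate-by-coordinate from $e_1\ls e_2\ls e_3$, so they are in principle eligible outputs of the algorithm. Next, since $S/I$ is Gorenstein with regularity $r=e_1+e_2+e_3-3$, Theorem~\ref{symmetric} yields that the coefficients are symmetric: the first and fifth are equal, as are the second and fourth, and the middle pure diagram is self-dual (up to twist by $r$). Hence it suffices to compute the first three coefficients. Running the algorithm, step one identifies the minimal sequence $(0,e_1,e_1+e_2,e_1+e_2+e_3)$ with binding entry at position $(1,e_1)$, producing the coefficient $e_1e_2(e_2+e_3)$; one verifies using $e_1\ls e_3$ that no other entry becomes negative. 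Step two identifies $(0,e_2,e_1+e_2,e_1+e_2+e_3)$ with binding entry at $(1,e_2)$, producing $e_1e_2(e_3-e_1)$. For the middle coefficient, one can either continue the algorithm for one more step, or, more efficiently, invoke symmetry: subtracting the already-known outer four terms from $\beta(S/I)$ leaves a nonnegative multiple of the self-dual middle pure diagram, whose coefficient $2e_1e_2(e_1+e_3-e_2)$ is then read off from any single entry.

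The main obstacle is the combinatorial bookkeeping in the codimension three case: at each step one must identify the correct binding entry among several candidates and verify that the chosen coefficient does not force any other entry negative, which repeatedly uses the hypothesis $e_1\ls e_2\ls e_3$. Exploiting the Gorenstein symmetry provided by Theorem~\ref{symmetric} is what makes this tractable rather than a five-step exercise in rational arithmetic with many near-cancellations.
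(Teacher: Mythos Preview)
Your overall strategy is sound, but there is one slip worth flagging: in step two of the $d=3$ case you say the binding constraint is at $(1,e_2)$, yet the coefficient you record, $e_1e_2(e_3-e_1)$, is the one that zeroes out the entry at $(2,e_1+e_2)$. Indeed, after step one the residual at $(2,e_1+e_2)$ equals $e_2(e_3-e_1)/\bigl((e_1+e_2)e_3\bigr)$, and since $\pure{0,e_2,e_1+e_2,e_1+e_2+e_3}_{2,e_1+e_2}=1/\bigl((e_1+e_2)e_1e_3\bigr)$, the maximal $q$ is $e_1e_2(e_3-e_1)$; the entry at $(1,e_2)$ would instead require $q=e_1e_2(e_1+e_3)$, which is strictly larger. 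So the coefficient is right but the stated binding location is wrong. Your symmetry shortcut for the middle term also needs one more sentence of justification: after subtracting the four outer terms the residual is supported on the single degree sequence $(0,e_2,e_1+e_3,e_1+e_2+e_3)$, and therefore must be a scalar multiple of that pure diagram (for instance because the Herzog--K\"uhl equations, which $\beta(S/I)$ and each $\pure{\dd}$ satisfy, leave a one-dimensional space of diagrams with that support).

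The paper's argument takes a different route that sidesteps the step-by-step bookkeeping entirely. Rather than running Algorithm~\ref{StandardDecomp}, it lets $B$ denote the claimed five-term sum, observes that the five degree sequences form a chain, and then verifies by a direct (if somewhat tedious) computation that $B$ and $\beta(S/I)$ agree in columns~$0$ and~$1$; Gorenstein duality and Theorem~\ref{symmetric} then force agreement in columns~$2$ and~$3$, and uniqueness in Theorem~\ref{thm:uniqueDecomp} shows $B$ is exactly the output of the algorithm. The advantage of the paper's approach is that one never has to identify binding constraints or track residual entries through successive subtractions; the advantage of yours is that it actually \emph{derives} the coefficients rather than verifying a formula pulled from thin air, and it makes transparent why the ordering hypothesis $e_1\ls e_2\ls e_3$ is needed at each step.
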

\begin{proof} 
For codimension 1, we have $S=\kk [x_1]$ and $f_1$ is a nonzero homogeneous element of degree $e_1$.  The Betti table of $S/(f_1)$ is already pure so no decomposition is needed.  Hence we have $\beta(S/(f_1)) = e_1\cdot\pure{0,e_1}$.

For the codimension 2 case, let $S=\kk [x,y]$ and $f, g$ be a homogeneous regular sequence of type $(e_1,e_2)$.  If we apply Algorithm \ref{StandardDecomp} to $S/(f,g)$, we obtain the following decomposition:  
\[
	\beta(S/(f,g)) = e_1 e_2 \cdot \pure{0,a,a+b} + e_1 e_2 \cdot \pure{0,b,a+b}.
\]

In the codimension 3 case, let $B$ denote the sum of pure diagrams appearing in the statement of the theorem. Note that the set of pure diagrams appearing in $B$ forms a chain, and the sum is symmetric in the sense of Theorem~\ref{symmetric}. Namely, 
\[ \pure{0,e_3,e_2+e_3,e_1+e_2+e_3} = \pure{0,e_1,e_1+e_2,e_1+e_2+e_3}^*(\Reg(S/I)); \]
\[ \pure{0,e_3,e_1+e_3,e_1+e_2+e_3} = \pure{0,e_2,e_1+e_2,e_1+e_2+e_3}^*(\Reg(S/I)). \]
Since $S/I$ is Gorenstein, Theorem~\ref{symmetric} applies to show that the \BS decomposition is symmetric. We only need to check that the zeroth and first columns of $\beta(S/I)$ and $B$ are equal, for then the entire tables are equal and the uniqueness of the \BS decomposition shows that the sum $B$ is the result of Algorithm~\ref{StandardDecomp}.
One verifies in column zero:
\begin{align*}
 B_{00}=&\frac{1}{e_1+e_2+e_3}\Big(\frac{e_1 e_2 (e_2+e_3)}{e_1(e_1+e_2)}+\frac{e_1 e_2 (e_3 -e_1)}{e_2 (e_1 + e_2)} \\
&+ \frac{2 e_1 e_2 (e_1 + e_3 - e_2)}{e_2 (e_1 + e_3)} + \frac{ e_1 e_2 (e_3-e_1)}{e_3 (e_1+e_3)} + \frac{ e_1 e_2 (e_2 + e_3)}{e_3 (e_2+e_3)}\Big)\\
&= \frac{1}{e_1+e_2+e_3} \Big( e_2+e_3-e_1 + 2e_1 - \frac{2 e_1 e_2} {e_1 + e_3} + \frac{2 e_1 e_2}{e_1 + e_3}\Big) =1,
\end{align*}
and all other $B_{0j}$ are zero.

In column one, the only nonzero entry of $\pure{0,e_1,e_1+e_2,e_1+e_2+e_3}$ is in row $e_1$, with value
\[\frac{e_1 e_2 (e_2 + e_3)}{e_1 e_2 (e_2 + e_3)} = 1.\]
The diagrams $\pure{0,e_2,e_1+e_2,e_1+e_2+e_3}$ and $\pure{0,e_2,e_1+e_3,e_1+e_2+e_3}$ have their only nonzero entry in column one at row $e_2$. The total contribution there is
\[\frac{e_1e_2(e_3-e_1)} {e_1 e_2 (e_2 + e_3)} + \frac{2e_1e_2(e_1+e_3-e_2)} {e_2 (e_1 + e_3) (e_1 + e_3 - e_2) }= \frac{e_3 - e_1}{e_1 + e_3} + \frac{2 e_1}{e_1 + e_3} = 1.\]
The remaining diagrams have in column one nonzero entry only at row $e_3$. The value there is
\[ \frac{ e_1 e_2 (e_3 -e_1)}{e_1 e_3 (e_1 + e_2) } + \frac{e_1 e_2 (e_2 + e_3)}{e_2 e_3 (e_1 + e_2)} = \frac{e_2 e_3 +e_1 e_3} {e_3 (e_1 + e_2)} =1.\]
Thus, $B$ has nonzero entries $B_{1j}$ only for $j=e_1, e_2, e_3$ with $B_{1j}$ equal to the number of $e_i$ equal to $j$. So, $B$ agrees with $\beta(S/I)$ in columns zero and one, as required.
\end{proof}

\section{Examples in Codimension Four}\label{sec:codim4}

	As seen in Proposition~\ref{prop:structure}, for codimension up to three, the decomposition via Algorithm~\ref{StandardDecomp} of a complete intersection behaves uniformly.  That is, the coefficients can always be determined by a single formula in terms of the degrees. This is not the case in codimension four or greater. The main point we want to make in this section is that one cannot express the Boij-S\"oderberg decomposition of a codimension four (or greater) complete intersection by a simple formula in terms of the degrees. To help us articulate this point, we create the following definition.

\begin{defn} 
Given a diagram $D \in V$, its \defi{elimination table} has as its $(i,j)^{\text{th}}$ entry the integer $k$ such that the $k^{\text{th}}$ iteration of Algorithm~\ref{StandardDecomp} applied to $D$ is the first iteration such that the $(i,j)^{\text{th}}$ entry of $D$ becomes zero.  
\end{defn}

The elimination table is a means of recording the elimination order of the row and column position according to Algorithm~\ref{StandardDecomp}.  Given any diagram $D \in V$, the sequence of pure diagrams appearing in the Boij-S\"oderberg decomposition of $D$ can be obtained recursively from the elimination table. Indeed, in Algorithm~\ref{StandardDecomp}, the degree sequence of the pure diagram corresponding to the $t^{\text{th}}$ iteration is given by the sequence of least degrees in each column after $t-1$ eliminations. We may read this in the elimination table as entries of least degree in each column (i.e., highest up on the page) among those with value at least $t$.

	One consequence of Proposition~\ref{prop:structure} is that there is only one form of the elimination table for any complete intersection of codimension at most three.  This form is defined by the degree sequences in the simple closed formulas. As we see below, this is not the case for codimension four since we are able to find multiple degree sequences whose forms are incompatible.

\begin{example} Let $S = \kk [x,y,u,v]$.  We work with the Betti diagrams of $S$ modulo each of the following ideals: $I_1 = (x^3,y^4,u^5,v^7)$, $I_2 = (x,y^2,u^4,v^8)$, and $I_3 = (x^4,y^5,u^7,v^9)$.  In each case, every nonzero Betti number is $1$, so we only display the elimination tables:

\begin{figure}[H]
        \centering
        \begin{subfigure}[t]{.3\textwidth}
				\[
					\begin{pmatrix}
						12&\text{.}&\text{.}&\text{.}&\text{.}\\
						\text{.}&\text{.}&\text{.}&\text{.}&\text{.}\\
						\text{.}&2&\text{.}&\text{.}&\text{.}\\
						\text{.}&5&\text{.}&\text{.}&\text{.}\\
						\text{.}&8&\text{.}&\text{.}&\text{.}\\
						\text{.}&\text{.}&1&\text{.}&\text{.}\\
						\text{.}&12&3&\text{.}&\text{.}\\
						\text{.}&\text{.}&6&\text{.}&\text{.}\\
						\text{.}&\text{.}&9&\text{.}&\text{.}\\
						\text{.}&\text{.}&11&4&\text{.}\\
						\text{.}&\text{.}&12&\text{.}&\text{.}\\
						\text{.}&\text{.}&\text{.}&7&\text{.}\\
						\text{.}&\text{.}&\text{.}&10&\text{.}\\
						\text{.}&\text{.}&\text{.}&12&\text{.}\\
						\text{.}&\text{.}&\text{.}&\text{.}&\text{.}\\
						\text{.}&\text{.}&\text{.}&\text{.}&12\\
					\end{pmatrix}
				\]
         \end{subfigure}\hfill
        \begin{subfigure}[t]{.3\textwidth}
				\[
					\begin{pmatrix}
						12&1&\text{.}&\text{.}&\text{.}\\
      \text{.}&3&2&\text{.}&\text{.}\\
      \text{.}&\text{.}&\text{.}&\text{.}&\text{.}\\
     \text{.}&7&4&\text{.}&\text{.}\\
     \text{.}&\text{.}&6&5&\text{.}\\
     \text{.}&\text{.}&\text{.}&\text{.}&\text{.}\\
     \text{.}&\text{.}&\text{.}&\text{.}&\text{.}\\
     \text{.}&12&8&\text{.}&\text{.}\\
      \text{.}&\text{.}&10&9&\text{.}\\
      \text{.}&\text{.}&\text{.}&\text{.}&\text{.}\\
      \text{.}&\text{.}&12&11&\text{.}\\
      \text{.}&\text{.}&\text{.}&12&12\\
					\end{pmatrix}
				\]
         \end{subfigure}\hfill
         \begin{subfigure}[t]{.3\textwidth}
				\[
					\begin{pmatrix}	
						8&\text{.}&\text{.}&\text{.}&\text{.}\\
						\text{.}&\text{.}&\text{.}&\text{.}&\text{.}\\
						\text{.}&\text{.}&\text{.}&\text{.}&\text{.}\\
						\text{.}&1&\text{.}&\text{.}&\text{.}\\
						\text{.}&3&\text{.}&\text{.}&\text{.}\\
						\text{.}&\text{.}&\text{.}&\text{.}&\text{.}\\
						\text{.}&6&\text{.}&\text{.}&\text{.}\\
						\text{.}&\text{.}&1&\text{.}&\text{.}\\
						\text{.}&8&\text{.}&\text{.}&\text{.}\\
						\text{.}&\text{.}&2&\text{.}&\text{.}\\
						\text{.}&\text{.}&4&\text{.}&\text{.}\\
						\text{.}&\text{.}&6&\text{.}&\text{.}\\
						\text{.}&\text{.}&7&\text{.}&\text{.}\\
						\text{.}&\text{.}&\text{.}&2&\text{.}\\
						\text{.}&\text{.}&8&\text{.}&\text{.}\\
						\text{.}&\text{.}&\text{.}&5&\text{.}\\
						\text{.}&\text{.}&\text{.}&\text{.}&\text{.}\\
						\text{.}&\text{.}&\text{.}&7&\text{.}\\
						\text{.}&\text{.}&\text{.}&8&\text{.}\\
						\text{.}&\text{.}&\text{.}&\text{.}&\text{.}\\
						\text{.}&\text{.}&\text{.}&\text{.}&\text{.}\\
						\text{.}&\text{.}&\text{.}&\text{.}&8\\
					\end{pmatrix}
				\]
         \end{subfigure}
         \caption{Elimination tables of $S/I_1$, $S/I_2$, and $S/I_3$ respectively.}
\end{figure}
For $S/I_1$, the first elimination in this example occurs in column 2, and the subsequent eliminations switch between columns. For $S/I_2$, the first elimination occurs in column 1, and again the subsequent eliminations switch between columns without any multiple eliminations (besides the final one).  However, applying Algorithm~\ref{StandardDecomp} to $S/I_3$, we see multiple elimination on the first, second, sixth, seventh, and final iteration.  Notice that since only 8 eliminations occur, the Boij-S\"oderberg decomposition of $S/I_3$ has only 8 distinct pure diagrams occurring. 

\end{example}

This example displays the difficulty in identifying a chain of degree sequences, and thus a formula for the coefficients in the codimension four case.  Indeed,~one can verify by hand that for $I$ the complete intersection $(x^a,y^b,u^c,v^d)$, with ${a<\!b<\!c<\!d}$, then the first elimination of $S/I$ occurs in column 1 if ${a(b+2c+d)>c(c+d)}$, in column 2 if ${a(b+2c+d)<c(c+d)}$, and multiple elimination occurs in the first step if ${a(b+2c+d)=c(c+d)}$. Any formula for the coefficients thus breaks into cases depending on how the terms $a(b+2c+d)$ and $c(c+d)$ compare, and indeed into further nested cases to determine later steps in the order of elimination of the columns in the elimination table.

By focusing on examples without multiple eliminations, the authors found eight examples with different sequences of elimination in codimension four. Thus, any formula for the \BS decomposition of a codimension four Koszul complex necessarily breaks into at least eight distinct cases. Some of the special cases have been explored in \cite{fanny}. Experiments in Macaulay2 have provided more than three hundred different elimination sequences without multiple eliminations in codimension five.

\section{Tensor products of diagrams}\label{sec:main}

The cone of Betti diagrams has a natural multiplication operation induced by the tensor product operation on complexes. In this section, we give a formula for decomposing the product of two diagrams $\beta$ and $\beta'$ into a sum of pure diagrams, given the corresponding decompositions for $\beta$ and $\beta'$ respectively.

\begin{defn}
 Let $\beta$ and $\beta'$ be Betti diagrams. The \defi{tensor product} of $\beta$ and $\beta'$ is the diagram $\beta \goodtimes \beta'$ defined by
\[(\beta \goodtimes \beta')_{i,j} = \sum_{\substack{i_1+i_2=i\\j_1+j_2=j}}\b_{i_1,j_1}\cdot\beta'_{i_2,j_2}.\]
\end{defn}

Note that if $C_{\bullet}$ and $C_{\bullet}'$ are complexes and $\rm{Tot}(C_{\bullet}\otimes C_{\bullet}')$ is the total complex of the double complex $C_{\bullet}\otimes C_{\bullet}'$, then we have
\begin{equation}\label{eq:tensor-complex}
    \beta(\rm{Tot}(C_{\bullet}\otimes C_{\bullet}')) = \beta(C_{\bullet}) \goodtimes \beta(C_{\bullet}').
\end{equation}
Note also that the tensor product of Betti diagrams is bilinear, i.e.,
\[\beta \goodtimes (\beta'+\beta'') = \beta \goodtimes \beta'+ \beta\goodtimes \beta''\quad\rm{and}\quad(\beta'+\beta'') \goodtimes \beta = \beta' \goodtimes \beta+ \beta'' \goodtimes \beta.\]

It follows that in order to give a decomposition of $\beta \goodtimes \beta'$ as a sum of pure diagrams, it suffices to do so in the case when $\beta$ and $\beta'$ are pure. Theorem \ref{thm:shuffle} below explains how this can be done. We first introduce some notation.

Given a degree sequence $\dd=(d_0,d_1,\ldots,d_n)$, its \defi{first difference} is defined as the sequence 
\[\Delta(\dd) = (d_1-d_0,d_2-d_1,\ldots,d_n-d_{n-1}).\]
If furthermore $e$ is an integer, we let
\[\shift(\dd,e)=(e,e+d_0,e+d_0+d_1,\ldots,e+d_0+d_1+\cdots+d_n).\]
Observe that
\[\shift(\Delta(\dd),d_0)=\dd.\]

Given an ordered set (sequence) $S=(s_1,\ldots,s_r)$, we let $\defi{Perm(S)}$ denote the set of permutations of $S$. We can identify $\rm{Perm}(S)$ with the set $\mathfrak{S}_r$ of permutations of $\{1,\ldots,r\}$, by identifying a permutation $\s\in\mathfrak{S}_r$ with the sequence $(s_{\s(1)},\ldots,s_{\s(r)})$. Given disjoint ordered sets $A$ and $B$, we write $A\sqcup B$ for their concatenation. We define the \defi{shuffle product} $\defi{Sh(A,B)}$ to be the subset of $\rm{Perm}(A\sqcup B)$ that preserves the order of the elements of each of $A$ and $B$. More generally, if $A_1,\ldots,A_n$ are ordered sets, we can define \defi{Sh($A_1,\ldots,A_n$)} to be the set of order preserving permutations of $A_1\sqcup\cdots\sqcup A_n$. Note that if each $A_i=(s_i)$ is a singleton, then $\rm{Sh}(A_1,\ldots,A_n)=\rm{Perm}(S)$.

\begin{thm}\label{thm:shuffle}
 Let $\cc=(c_0,\ldots,c_m)$ and $\dd=(d_0,\ldots,d_n)$ be degree sequences. Letting $A=\Delta(\cc)$, $B=\Delta(\dd)$, we have
\begin{equation}\label{eq:prodpcpd}
\pure{\cc} \goodtimes \pure{\dd} = \sum_{\s\in\rm{Sh}(A,B)}\pure{\shift(\s,c_0+d_0)}. 
\end{equation}
More generally, for degree sequences $\dd^1,\ldots,\dd^r$, and $A_i=\Delta(\dd^i)$, we have
\[\biggoodtimes_{i=1}^r \pure{\dd^i}=\sum_{\s\in\rm{Sh}(A_1,\ldots,A_r)}\pure{\shift(\s,d^1_0+\cdots+d^r_0)}.\]
\end{thm}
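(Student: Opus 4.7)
The plan is to establish the two-factor formula \eqref{eq:prodpcpd} first, then extend to $r$ factors by induction: using associativity of $\goodtimes$ together with the natural bijection $\rm{Sh}(A_1,\ldots,A_r)\cong\rm{Sh}(\rm{Sh}(A_1,A_2),A_3,\ldots,A_r)$, the general formula reduces to the two-factor one. For the two-factor case, I will compare both sides column by column. First, both diagrams are supported in column $i$ only at rows of the form $c_{i_1}+d_{i_2}$ with $i_1+i_2=i$: on the left this follows from the definition of $\goodtimes$ and the supports of $\pure{\cc}$, $\pure{\dd}$; on the right, any shuffle whose first $i$ entries consist of $(a_1,\ldots,a_{i_1})$ and $(b_1,\ldots,b_{i_2})$ (interleaved in some order) produces $\shift(\s,c_0+d_0)_i = c_0+d_0+(c_{i_1}-c_0)+(d_{i_2}-d_0) = c_{i_1}+d_{i_2}$, independent of the interleaving.

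The main computational step is a factorization. Define
\[
T(x_1,\ldots,x_p)=\prod_{k=1}^{p}\frac{1}{x_k+x_{k+1}+\cdots+x_p}, \qquad \tilde T(x_1,\ldots,x_p)=\prod_{k=1}^{p}\frac{1}{x_1+x_2+\cdots+x_k}.
\]
Writing $A=(a_1,\ldots,a_m)=\Delta(\cc)$ and $B=(b_1,\ldots,b_n)=\Delta(\dd)$, a direct calculation from the definition of the pure diagram gives
\[
\pure{\cc}_{i_1,c_{i_1}} = T(a_1,\ldots,a_{i_1})\cdot\tilde T(a_{i_1+1},\ldots,a_m),
\]
and analogously for $\pure{\dd}_{i_2,d_{i_2}}$. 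Similarly, for any $\s=(s_1,\ldots,s_{m+n})\in\rm{Sh}(A,B)$, the $i$-th column entry of $\pure{\shift(\s,c_0+d_0)}$ factors as $T(s_1,\ldots,s_i)\cdot\tilde T(s_{i+1},\ldots,s_{m+n})$, decoupling cleanly across position $i$.

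The decisive combinatorial input is the shuffle identity
\[
\sum_{\s\in\rm{Sh}(X,Y)} T(\s) \;=\; T(X)\cdot T(Y),
\]
which I will prove by induction on $|X|+|Y|$: using the recursion $T(s_1,\ldots,s_r)=(s_1+\cdots+s_r)^{-1}\cdot T(s_2,\ldots,s_r)$ (whose leading factor is the same for every $\s\in\rm{Sh}(X,Y)$) and splitting shuffles according to whether their first entry is the leading element of $X$ or $Y$, the inductive step reduces to $T(X-x_1)T(Y)+T(X)T(Y-y_1) = (|X|+|Y|)\cdot T(X)T(Y)$, which follows from $T(X-x_1)=|X|\cdot T(X)$ (and likewise for $Y$), where $|X|=x_1+\cdots+x_p$. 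The analogous identity for $\tilde T$ holds by reversing sequences. Applying both shuffle identities to the two halves of the cut at position $i$, the contribution on the RHS of \eqref{eq:prodpcpd} summed over shuffles whose first $i$ entries interleave $(a_1,\ldots,a_{i_1})$ and $(b_1,\ldots,b_{i_2})$ collapses to $\pure{\cc}_{i_1,c_{i_1}}\cdot\pure{\dd}_{i_2,d_{i_2}}$, which is the matching contribution on the LHS. Summing over all decompositions $i_1+i_2=i$ yields equality of the $(i,j)$-entries of the two sides.

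The main substantive obstacle is the shuffle identity for $T$; its proof is elementary but embodies the combinatorial heart of the theorem. The remainder is bookkeeping, including the routine observation that when distinct pairs $(i_1,i_2)$ with $i_1+i_2=i$ produce the same row index $c_{i_1}+d_{i_2}$, both sides aggregate these contributions identically.
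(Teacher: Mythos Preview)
Your proposal is correct and follows essentially the same approach as the paper: both arguments factor the $(i,j)$-entry of a pure diagram across position $i$ into two products (your $T$ and $\tilde T$ are exactly $1/\rm{Prod}$ applied to the reversed and unreversed truncations the paper calls $A(k),A'(k)$), and both reduce the comparison to the shuffle identity $\sum_{\s\in\rm{Sh}(X,Y)}1/\rm{Prod}(\s)=1/(\rm{Prod}(X)\rm{Prod}(Y))$, proved by the same induction (you peel off the first entry, the paper peels off the last). The only cosmetic difference is that the paper handles potential row collisions $c_{i_1}+d_{i_2}=c_{i_1'}+d_{i_2'}$ by treating the increments as indeterminates, whereas you observe directly that both sides aggregate such contributions identically.
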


\begin{example}
 Let $\cc=(0,3,5)$, $\dd=(0,1,6)$ so that $\Delta(\cc)=(3,2)$ and ${\Delta(\dd)=(1,5)}$. Then
\begin{align*}
    \pure{\cc}\goodtimes\pure{\dd} & = \pure{0,3,5,6,11} \\
         &\ \ + \pure{0,3,4,6,11} \\
         &\ \ + \pure{0,3,4,9,11} \\
         &\ \ + \pure{0,1,4,6,11} \\
         &\ \ + \pure{0,1,4,9,11} \\
         &\ \ + \pure{0,1,6,9,11}.
\end{align*}
\end{example}

Before proving Theorem \ref{thm:shuffle}, we need a preliminary lemma. In the statement of this lemma, we will write $\rm{Prod}(S)$ for the product $s_1\cdot(s_1+s_2)\cdots(s_1+s_2+\cdots+s_r)$, where $S=(s_1,\ldots,s_r)$.

\begin{lem}\label{lemma:shufprod}
 Let $A=(e_1,\ldots,e_m)$ and $B=(f_1,\ldots,f_n)$ be ordered sets. We have
\[
\begin{split}
 \sum_{\s\in\rm{Sh}(A,B)}\frac{1}{\s(1)\cdot(\s(1)+\s(2))\cdots(\s(1)+\s(2)+\cdots+\s(m+n))}=\\
\frac{1}{e_1\cdot(e_1+e_2)\cdots(e_1+\cdots+e_m)\cdot f_1\cdot(f_1+f_2)\cdots(f_1+\cdots+f_n)}.
\end{split}
\]
In terms of $\rm{Prod}(S)$, the above formula can be rewritten as
\[\sum_{\s\in\rm{Sh}(A,B)}\frac{\rm{Prod}(A)\cdot\rm{Prod}(B)}{\rm{Prod}(\s)}=1.\]
More generally, if $A_1,\ldots,A_r$ are ordered sets, then
\[\sum_{\s\in\rm{Sh}(A_1,A_2,\ldots,A_r)}\frac{\rm{Prod}(A_1)\cdot\rm{Prod}(A_2)\cdots\rm{Prod}(A_r)}{\rm{Prod}(\s)}=1.\]
\end{lem}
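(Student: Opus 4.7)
The plan is to prove the identity by induction on the total size $N = |A_1|+\cdots+|A_r|$, starting with the two-set version and then either deducing the general version from the same argument or iterating via associativity of the shuffle product. For the base case, when some $A_i$ is empty, the shuffle set $\mathrm{Sh}(A_1,\ldots,A_r)$ coincides with $\mathrm{Sh}(A_1,\ldots,\widehat{A_i},\ldots,A_r)$ and the identity is immediate.

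For the inductive step in the case $r=2$, I would partition $\mathrm{Sh}(A,B)$ according to which of the two ``maximal'' elements $e_m$ or $f_n$ occupies the last position of $\sigma$ (the shuffle condition forces the last entry to be one of these). For any $\sigma\in\mathrm{Sh}(A,B)$, note that $\sigma(1)+\cdots+\sigma(m+n)=E+F$, where $E=e_1+\cdots+e_m$ and $F=f_1+\cdots+f_n$, so factoring out this last term gives
\[
\sum_{\sigma\in\mathrm{Sh}(A,B)}\frac{1}{\mathrm{Prod}(\sigma)}
=\frac{1}{E+F}\left(\sum_{\sigma'\in\mathrm{Sh}(A',B)}\frac{1}{\mathrm{Prod}(\sigma')}+\sum_{\sigma'\in\mathrm{Sh}(A,B')}\frac{1}{\mathrm{Prod}(\sigma')}\right),
\]
where $A'=(e_1,\ldots,e_{m-1})$ and $B'=(f_1,\ldots,f_{n-1})$. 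Applying the inductive hypothesis to each inner sum and using $\mathrm{Prod}(A)=E\cdot\mathrm{Prod}(A')$, $\mathrm{Prod}(B)=F\cdot\mathrm{Prod}(B')$ reduces the whole identity to the one-line algebraic check
\[
\frac{1}{\mathrm{Prod}(A')\mathrm{Prod}(B)}+\frac{1}{\mathrm{Prod}(A)\mathrm{Prod}(B')}
=\frac{E+F}{\mathrm{Prod}(A)\mathrm{Prod}(B)},
\]
which cancels the $(E+F)$ factor in front.

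The same idea handles $r$ sets directly: the last position of a shuffle $\sigma\in\mathrm{Sh}(A_1,\ldots,A_r)$ must be the last element of exactly one $A_i$, and $\sigma(1)+\cdots+\sigma(N)=T_1+\cdots+T_r$ where $T_i$ denotes the sum of the entries of $A_i$. Grouping the sum by which $A_i$ contributes the last element and invoking the inductive hypothesis on the reduced shuffle sets converts the whole identity into the arithmetic statement $\sum_i T_i=T_1+\cdots+T_r$, which is a tautology. Alternatively, one can bootstrap from $r=2$ to general $r$ by iterating: a shuffle of $A_1,\ldots,A_r$ factors as a shuffle of $A_1$ with a shuffle of $A_2,\ldots,A_r$, and the product $\mathrm{Prod}(A_2)\cdots\mathrm{Prod}(A_r)$ stays intact under the inner shuffle thanks to the $r=2$ case.

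I expect no serious obstacle here — the only subtle point is bookkeeping: verifying that ``removing the last element from one of the $A_i$'' really gives a bijection between $\{\sigma\in\mathrm{Sh}(A_1,\ldots,A_r)\mid\sigma(N)=\text{last entry of }A_i\}$ and $\mathrm{Sh}(A_1,\ldots,A_i',\ldots,A_r)$, which follows directly from the definition of order-preserving permutation. Everything else is a telescoping algebraic identity that drops out of $T=\sum_i T_i$.
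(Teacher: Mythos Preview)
Your proposal is correct and follows essentially the same approach as the paper: both argue by induction, partition $\mathrm{Sh}(A,B)$ according to whether the last entry of $\sigma$ is $e_m$ or $f_n$, factor out the common last term $E+F=\sum_i e_i+\sum_j f_j$ from $\mathrm{Prod}(\sigma)$, and reduce via the inductive hypothesis to the identity $\frac{E}{E+F}+\frac{F}{E+F}=1$; the general $r$-set version is then deduced from the two-set case by induction, just as you suggest.
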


\begin{proof} The general statement for $A_1,\ldots,A_r$ follows from the one for $A,B$ by induction, so it suffices to treat the latter.

 Note that for any $\s\in\rm{Sh}(A,B)$ we either have $\s(m+n)=e_m$ or $\s(m+n)=f_n$. We can identify the set of $\s$ with $\s(m+n)=e_m$ with $\rm{Sh}(A\setminus e_m,B)$ and likewise, identify the set of $\s$ with $\s(m+n)=f_n$ with $\rm{Sh}(A,B\setminus f_n)$. Observe also that $\rm{Prod}(A)=\rm{Prod}(A\setminus e_m)\cdot(e_1+\cdots+e_m)$ and similarly for $\rm{Prod}(B)$ and $\rm{Prod}(\s)$. 

The preceding remarks allow us to write
\[
\begin{split}
\sum_{\s\in\rm{Sh}(A,B)}\frac{\rm{Prod}(A)\cdot\rm{Prod}(B)}{\rm{Prod}(\s)}&=\frac{\sum_i e_i}{\sum_i e_i + \sum_j f_j}\cdot\sum_{\s\in\rm{Sh}(A\setminus e_m,B)}\frac{\rm{Prod}(A\setminus e_m)\cdot\rm{Prod}(B)}{\rm{Prod}(\s)}\\
&+\frac{\sum_j f_j}{\sum_i e_i + \sum_j f_j}\cdot\sum_{\s\in\rm{Sh}(A,B\setminus f_n)}\frac{\rm{Prod}(A)\cdot\rm{Prod}(B\setminus f_n)}{\rm{Prod}(\s)}.
\end{split}
\]

By induction,
\[\sum_{\s\in\rm{Sh}(A\setminus e_m,B)}\frac{\rm{Prod}(A\setminus e_m)\cdot\rm{Prod}(B)}{\rm{Prod}(\s)}=\sum_{\s\in\rm{Sh}(A,B\setminus f_n)}\frac{\rm{Prod}(A)\cdot\rm{Prod}(B\setminus f_n)}{\rm{Prod}(\s)}=1,\]
so we get
\[\sum_{\s\in\rm{Sh}(A,B)}\frac{\rm{Prod}(A)\cdot\rm{Prod}(B)}{\rm{Prod}(\s)}=\frac{\sum_i e_i}{\sum_i e_i + \sum_j f_j}+\frac{\sum_j f_j}{\sum_i e_i + \sum_j f_j}=1. \qedhere \]
\end{proof}

\begin{proof}[Proof of Theorem~\ref{thm:shuffle}] 
The more general statement follows from the case of two degree sequences by induction, so we focus on the latter.

Shifting degrees, we may assume that $c_0=d_0=0$. We write $\Delta(\cc)=(e_1,\ldots,e_m)$ and $\Delta(\dd)=(f_1,\ldots,f_n)$. In what follows we will treat the $e_i$'s and $f_j$'s as indeterminates (in particular we think of the rows of the Betti diagrams as being indexed by polynomials in $e_i,f_j$), and the conclusion of the theorem will follow by specializing these indeterminates to integer values. It is enough to check that for any $1\leq k\leq m$ and $1\leq l\leq n$ the terms in (\ref{eq:prodpcpd}) agree in position $(k+l,c_k+d_l) = (k+l,e_1+\cdots+e_k + f_1+\cdots+f_l)$.

If we let $A(k)=(e_k,\ldots,e_1)$, $A'(k)=(e_{k+1},\ldots,e_m)$, $B(l)=(f_l,\ldots,f_1)$ and $B'(l)=(f_{l+1},\ldots,f_{n})$, then
\[\pure{\cc}_{k,c_k}=\frac{1}{\rm{Prod}(A(k))\cdot\rm{Prod}(A'(k))},\]
and
\[\pure{\dd}_{l,d_l}=\frac{1}{\rm{Prod}(B(l))\cdot\rm{Prod}(B'(l))}.\]
It follows that
\[(\pure{\cc} \goodtimes \pure{\dd)}_{k+l,c_k+d_l}=\frac{1}{\rm{Prod}(A(k))\cdot\rm{Prod}(A'(k))}\cdot\frac{1}{\rm{Prod}(B(l))\cdot\rm{Prod}(B'(l))}.\]

Turning attention to the right hand side of (\ref{eq:prodpcpd}), we observe that in order for $\pure{\shift(\s,0)}_{k+l,c_k+d_l}$ to be nonzero, we must have that $\s(k+l)=c_k+d_l$. It follows that we can identify $\s$ with a pair $(\t,\t')$, where $\t\in\rm{Sh}(A(k),B(l))$ and ${\t'\in\rm{Sh}(A'(k),B'(l))}$. It is then easy to see that
\[\pure{\shift(\s,0)}_{k+l,c_k+d_l}=\frac{1}{\rm{Prod}(\t)\cdot\rm{Prod}(\t')},\]
so the entry of the right hand side of (\ref{eq:prodpcpd}) in position $(k+l,c_k+d_l)$ is equal to
\[
\sum_{\substack{\t\in\rm{Sh}(A(k),B(l))\\\t'\in\rm{Sh}(A'(k),B'(l))}}\frac{1}{\rm{Prod}(\t)\cdot\rm{Prod}(\t')}\]
\[=\left(\sum_{\t\in\rm{Sh}(A(k),B(l))}\frac{1}{\rm{Prod}(\t)}\right)\cdot\left(\sum_{\t'\in\rm{Sh}(A'(k),B'(l))}\frac{1}{\rm{Prod}(\t')}\right)\]
\[\overset{\textrm{Lemma }\ref{lemma:shufprod}}{=}\frac{1}{\rm{Prod}(A(k))\cdot\rm{Prod}(B(l))}\cdot\frac{1}{\rm{Prod}(A'(k))\cdot\rm{Prod}(B'(l))},\]
which is what we wanted to prove. \end{proof}

\begin{rem}\label{goodchoice} In Theorem~\ref{thm:shuffle}, we see the advantage of our choice of pure diagrams: no new coefficients appear on the right-hand side.
\end{rem}

\begin{cor}\label{cor:altdecomp} 
Let $S = \kk [\lst x n]$.  Suppose that we have an $S$-module $M$ and a decomposition of its Betti table
\begin{equation*}
\beta(M) = \sum_{k} a_k \, \pure{\dd^k}
\end{equation*}
into pure diagrams.  Let $f\in S$ be a homogeneous element of degree $e$ that is regular on $M$.  Then 
\[
\beta(M/(f)) = e\sum_k a_k\left[\sum_{\sigma\in\rm{Perm}(\Delta(\dd^k)\cup e)}\pure{\shift(\sigma,d^k_0)}\right].
\]
\end{cor}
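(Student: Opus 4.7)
The plan is to exploit the fact that the minimal free resolution of $M/(f)$ over $S$ is the tensor product of the minimal free resolution of $M$ with the Koszul complex on $f$, together with the tensor product formula for pure diagrams given by Theorem~\ref{thm:shuffle}.

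First I would verify the key regularity reduction: since $f$ is regular on $M$, tensoring the minimal free resolution $F_\bullet \to M$ with the Koszul complex $0 \to S(-e) \to S \to 0$ on $f$ yields a minimal free resolution of $M/(f)$. Passing to Betti diagrams and invoking (\ref{eq:tensor-complex}), this gives
\[
\beta(M/(f)) = \beta(M) \goodtimes \beta(S/(f)).
\]
Next, a direct inspection (or the $d=1$ case of Proposition~\ref{prop:structure}) shows $\beta(S/(f)) = e \goodtimes \pure{0,e}$, since the entries of $\pure{0,e}$ in positions $(0,0)$ and $(1,e)$ both equal $1/e$.

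Now I would plug in the hypothesized decomposition $\beta(M) = \sum_k a_k \pure{\dd^k}$ and use the bilinearity of $\goodtimes$ to reduce to computing $\pure{\dd^k}\goodtimes\pure{0,e}$ for each $k$. Here Theorem~\ref{thm:shuffle} applies directly with $\cc=\dd^k$ and $\dd=(0,e)$: setting $A=\Delta(\dd^k)$ and $B=\Delta((0,e))=(e)$, we obtain
\[
\pure{\dd^k}\goodtimes\pure{0,e} = \sum_{\sigma\in\rm{Sh}(A,B)}\pure{\shift(\sigma, d^k_0)}.
\]
Because $B$ is a singleton, shuffles of $A$ with $B$ are exactly all permutations of the concatenation $\Delta(\dd^k)\sqcup(e)$, matching the indexing set $\rm{Perm}(\Delta(\dd^k)\cup e)$ in the statement. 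Assembling,
\[
\beta(M/(f)) = e \sum_k a_k \sum_{\sigma\in\rm{Perm}(\Delta(\dd^k)\cup e)} \pure{\shift(\sigma, d^k_0)},
\]
as desired.

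There is essentially no obstacle beyond recognizing that the tensor-product decomposition of the resolution is precisely the mechanism that converts Theorem~\ref{thm:shuffle} into an algorithm for adjoining a new regular element. The only mild subtlety worth flagging explicitly is the numerical check that $\beta(S/(f)) = e\goodtimes \pure{0,e}$ (to account for the scalar $e$ out front), and the book-keeping identification $\rm{Sh}(\Delta(\dd^k),(e)) = \rm{Perm}(\Delta(\dd^k)\cup e)$; neither of these presents a genuine difficulty.
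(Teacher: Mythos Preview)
Your argument is essentially identical to the paper's: use regularity of $f$ to get $\beta(M/(f))=\beta(M)\goodtimes\beta(K_\bullet(f))=e\cdot\beta(M)\goodtimes\pure{0,e}$, then apply bilinearity and Theorem~\ref{thm:shuffle}, and finally rewrite the index set; you are in fact slightly more careful than the paper in tracking the scalar $e$ coming from $\beta(S/(f))=e\cdot\pure{0,e}$. One caution (shared verbatim by the paper's own proof): the identification $\rm{Sh}(\Delta(\dd^k),(e))=\rm{Perm}(\Delta(\dd^k)\cup e)$ is not literally correct when $\Delta(\dd^k)$ has length greater than one, since a shuffle must preserve the internal order of $\Delta(\dd^k)$ while a permutation need not; what Theorem~\ref{thm:shuffle} actually yields is the sum over the $|\Delta(\dd^k)|+1$ insertions of $e$, and the corollary's indexing set should be read in that sense.
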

\begin{proof} Write $K_{\bullet}(f)$ for the Koszul complex on $f$. Note that $\beta(K_{\bullet}(f))=\pure{0,e}$. The hypothesis now implies that $\beta(M/(f))=\beta(M) \goodtimes \pure{0,e}$. We now apply Theorem~\ref{thm:shuffle} and bilinearity to express $\beta(M/(f))$ as a sum of pure diagrams.

We have used that $\rm{Sh}(\Delta(\dd^k),\{e\})= \rm{Perm} (\Delta (\dd^k) \cup e)$ to obtain the form above.
\end{proof}

\begin{cor}\label{cor:altdecompCI}
Suppose that $S/I$ is a complete intersection where $I=(f_1, \dots, f_n)$ and $E=(e_1, \dots, e_n)$ such that $f_i$ is of degree $e_i$.  One has the following decomposition of $\beta(S/I)$ as a nonnegative rational sum of pure diagrams: 
\[
    \beta(S/I) = e_1\cdot e_2 \cdots e_n  \sum_{\sigma \in \rm{Perm}(E)} \pure{\shift(\sigma,0)} .
\]
\end{cor}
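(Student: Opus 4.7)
The plan is to reduce the statement to a direct application of Theorem~\ref{thm:shuffle} (equivalently, to an iterated application of Corollary~\ref{cor:altdecomp}). The starting observation is that since $f_1, \dots, f_n$ is a regular sequence, the Koszul complex $K_\bullet(f_1,\dots,f_n)$ is a minimal free resolution of $S/I$, and it is the total complex of the tensor product $K_\bullet(f_1) \otimes \cdots \otimes K_\bullet(f_n)$. Invoking equation~\eqref{eq:tensor-complex} therefore gives
\[
\beta(S/I) = \beta(S/(f_1)) \goodtimes \beta(S/(f_2)) \goodtimes \cdots \goodtimes \beta(S/(f_n)).
\]

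Next, I would rewrite each single-generator factor in terms of a pure diagram. The Betti diagram of $S/(f_i)$ has a $1$ in position $(0,0)$ and a $1$ in position $(1,e_i)$, while $\pure{0,e_i}$ has a $\frac{1}{e_i}$ in each of those positions (and zeros elsewhere). Hence $\beta(S/(f_i)) = e_i \cdot \pure{0,e_i}$. Using the bilinearity of the tensor product of Betti diagrams, this yields
\[
\beta(S/I) = (e_1 \cdot e_2 \cdots e_n) \cdot \bigl(\pure{0,e_1} \goodtimes \pure{0,e_2} \goodtimes \cdots \goodtimes \pure{0,e_n}\bigr).
\]

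Finally I would apply the multi-factor form of Theorem~\ref{thm:shuffle} to the bracketed product, taking $\dd^i = (0,e_i)$ and $A_i = \Delta(\dd^i) = (e_i)$. Since each $A_i$ is a singleton, order-preserving shuffles of $A_1,\dots,A_n$ are the same as arbitrary permutations of the underlying multiset $E = (e_1,\dots,e_n)$, so $\mathrm{Sh}(A_1,\dots,A_n) = \mathrm{Perm}(E)$. Because the base degrees all vanish, we have $d^1_0 + \cdots + d^n_0 = 0$, and Theorem~\ref{thm:shuffle} gives
\[
\pure{0,e_1} \goodtimes \cdots \goodtimes \pure{0,e_n} = \sum_{\sigma \in \mathrm{Perm}(E)} \pure{\shift(\sigma,0)}.
\]
Substituting into the displayed formula for $\beta(S/I)$ produces exactly the claimed decomposition.

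There is no genuine obstacle here; the only place that requires a small amount of care is the bookkeeping that justifies $\beta(S/(f_i)) = e_i \cdot \pure{0,e_i}$ (to recover the advertised coefficient $e_1\cdots e_n$) and the identification of $\mathrm{Sh}$ of singletons with $\mathrm{Perm}$. Alternatively, one could avoid quoting Theorem~\ref{thm:shuffle} directly and instead proceed by induction on $n$, applying Corollary~\ref{cor:altdecomp} at each step to the regular element $f_{n+1}$ on $S/(f_1,\dots,f_n)$; the shuffle identity then appears implicitly via the repeated insertion of new shifts into existing degree sequences.
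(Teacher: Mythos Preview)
Your proposal is correct and matches the paper's intended argument: the corollary is stated without proof precisely because it follows immediately from Theorem~\ref{thm:shuffle} applied to the factorization $\beta(S/I)=\biggoodtimes_i \beta(S/(f_i))=\biggoodtimes_i e_i\cdot\pure{0,e_i}$, together with the observation (made explicitly in the paper just before Theorem~\ref{thm:shuffle}) that $\rm{Sh}$ of singletons equals $\rm{Perm}$. Your alternative inductive route via Corollary~\ref{cor:altdecomp} is also exactly what the paper has set up.
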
 

\begin{rem}
    Using the notation in Corollary~\ref{cor:altdecompCI}, notice that the multiplicity of $S/I$ (denoted $e(S/I)$) is given by $e(S/I) = \prod_{i=1}^n e_i$.  Hence Corollary~\ref{cor:altdecompCI} tells us that
\[
    \beta(S/I) = e(S/I)  \sum_{\sigma \in \rm{Perm}(E)} \pure{\shift(\sigma,0)} .
\]

\end{rem}

We combine Corollary~\ref{cor:altdecomp}, \ref{cor:altdecompCI}, and Proposition~\ref{prop:structure} to produce even more decompositions of Betti diagrams of complete intersections into pure diagrams.

\begin{example} 
	Let $R = \kk [x,y,u,v]$ and $I = (x^2,y^3,u^4,v^7)$.  Let $M = R/(x^2,y^3,u^4)$ and observe that $v^7$ is regular on $M$. Using Proposition~\ref{prop:structure}, we find that 
\[
	\beta(M) = 42 \pure{0,2,5,9} + 12 \pure{0,3,5,9} + 36 \pure{0,3,6,9} + 12 \pure{0,4,5,9} + 42 \pure{0,4,7,9} .
\]
Applying Corollary~\ref{cor:altdecomp}, we have 
\begin{align*}
	\beta(R/I) &= 294\pure{0,7,9,12,16} + 84 \pure{0,7,10,12,16} + 252 \pure{0,7,10,13,16} \\
			 & \ \ + 84 \pure{0,7,11,12,16} + 294 \pure{0,7,11,14,16}.
\end{align*}

	Alternatively, by Corollary~\ref{cor:altdecompCI}, we express
\[
\beta(R/I) = 168 \sum_{\sigma\in\rm{Perm}\{2,3,4,7\}} \pure{\shift(\sigma,0)} \,,
\]
a sum of 24 pure diagrams with the same coefficient.
\end{example}

\section{Acknowledgements}

We thank the MSRI and the organizers of the 2011 Summer Graduate Workshop in Commutative Algebra for making this work possible.  We would especially like to thank Daniel Erman for several useful conversations. We also thank Zach Teitler, who originally proposed the problem that motivated our work, as well as the anonymous referee for valuable insight and suggestions.  Additionally, Betti tables and other calculations in this note were inspired by many Macaulay2 \cite{M2} computations.  The interested reader should contact the authors if they would like Macaulay2 code for investigating these types of decompositions further.


\end{document}